\definecolor{blue3}{RGB}{0,0,205}
\newtheorem{theorem}{Theorem}[section]
\newtheorem{corollary}[theorem]{Corollary}
\theoremstyle{definition}
\newtheorem{definition}[theorem]{Definition}
\newtheorem{example}[theorem]{Example}
\theoremstyle{remark}
\newtheorem{remark}[theorem]{Remark}
\numberwithin{equation}{section}
\newcommand{\RR}{\mathbb{R}}
\newcommand{\ZZ}{\mathbb{Z}}
\newcommand{\bb}{\bar{b}}
\newcommand{\cF}{\mathcal{F}}
\newcommand{\fX}{\mathfrak{X}}
\begin{document}

\title{A splitting theorem for compact Vaisman manifolds}


\author{Giovanni Bazzoni}
\address{Mathematisches Institut der Ludwig-Maximilians-Universit\"at, M\"unchen\\ Theresienstr.\ 39, 80333, M\"unchen}
\curraddr{}
\email{bazzoni@math.lmu.de}
\thanks{}

\author{Juan Carlos Marrero}
\address{Departmento de Matem\'aticas, Estad{\'\i}stica e Investigaci\'on Operativa, Secci\'on de Matem\'aticas, Facultad de Ciencias, Universidad de La Laguna, 38271, La Laguna, Tenerife, Spain}
\curraddr{}
\email{jcmarrer@ull.edu.es}
\thanks{}

\author{John Oprea}
\address{Department of Mathematics\\
Cleveland State University\\
Cleveland OH \\
44115  USA}
\email{j.oprea@csuohio.edu}
\thanks{}

\subjclass[2010]{53C25, 53C55}

\keywords{Vaisman manifold, Sasakian manifold, mapping torus, finite cover, Betti numbers, fundamental group.}

\date{}

\dedicatory{In memory of Sergio Console}

\begin{abstract}
We extend to metric compact mapping tori a splitting result for coK\"ahler manifolds. In particular, we prove that a compact Vaisman manifold is finitely covered by the product of a Sasakian manifold and a circle.
\end{abstract}

\maketitle

\section{Introduction}\label{Sec:1}

It is quite often the case that two geometric structures are intimately related to one another.
This is true, for instance, for Sasakian and K\"ahler structures. Indeed, assume that $K$ is a K\"ahler manifold such that the K\"ahler class $[\omega]$ is integral. 
The Boothby-Wang construction (\cite{Boothby_Wang}) produces a principal bundle $S^1\to S\to K$ with a connection whose curvature is $\omega$; moreover, the total space $S$ admits a Sasakian structure. 
This construction can be reversed if $S$ is compact and the Sasakian structure is regular. Moreover, given a manifold $S$ endowed with an almost contact metric structure, the product $S\times\RR^{>0}$ with the cone metric 
is K\"ahler if and only if the structure is Sasakian.

But more is true. Sasakian structures are also related to Vaisman structures: if $\varphi$ is an automorphism of a Sasakian manifold $S$, then the mapping torus $S_\varphi$ has a natural Vaisman structure.
Conversely, Ornea and Verbitsky showed in \cite{OV3} that a compact Vaisman manifold is always diffeomorphic to the mapping torus of an automorphism of a Sasakian manifold (see Example \ref{ex:2}).

In this short note we propose to explore further the relation between Vaisman and Sasakian structures. We recalled that a compact Vaisman manifold $V$ is a mapping torus $S_\varphi$ of a Sasakian automorphism. The idea is to 
apply the techniques of \cite{BO} to show that the structure group of such a mapping torus is finite, hence a compact Vaisman manifold is finitely covered by the 
product of a compact Sasakian manifold and a circle. From this, we obtain topological information about compact Vaisman manifolds.

\section{Preliminaries}

\begin{definition}
 Let $X$ be a topological space and let $\varphi\colon X\to X$ be a homeomorphism. The \emph{mapping torus} or \emph{suspension} of $(X,\varphi)$, denoted $X_\varphi$, is the quotient space
 \[
  \frac{X\times [0,1]}{(x,0)\sim(\varphi(x),1)}.
 \]
The pair $(X,\varphi)$ is the \emph{fundamental data} of $X_\varphi$.
\end{definition}

Notice that $\mathrm{pr}_2\colon X\times[0,1]\to[0,1]$ induces a projection $\pi\colon X_\varphi\to S^1$, where $S^1=\RR/\ZZ$, whose fiber is $X$. Hence $X_\varphi$ is a fiber bundle with base $S^1$ and fiber $X$.
It can be shown (compare \cite[Proposition 6.4]{BO}) that the structure group of the bundle $X\to X_\varphi\to S^1$ is the cyclic group 
$\langle\varphi\rangle\subset \mathrm{Homeo}(X)$. Moreover, $X_\varphi$ is trivial as a bundle over $S^1$, i.e.\ $X_\varphi \cong X\times S^1$ over $S^1$, if and only if $\varphi$ lies in the connected component of the identity 
of $\mathrm{Homeo}(X)$.

Here is an equivalent definition: given the fundamental data $(X,\varphi)$, we consider the following $\ZZ$-action on the product $X\times\RR$:
\[
 m\cdot(x,t)=(\varphi^m(x),t+m).
\]
Notice that this action is free and properly discontinuous. The quotient space $(X\times\RR)/\ZZ$ is homeomorphic to $X_\varphi$. In particular, if $M$ is a smooth manifold and $\varphi\colon M\to M$ is a diffeomorphism, we 
conclude that $M_\varphi$ is a smooth manifold and $\pi\colon M_\varphi\to S^1$ is a smooth fiber bundle with fiber $M$. The length 1-form $\sigma$ on $S^1$ pulls back under $\pi$ to a closed 1-form $\theta\in\Omega^1(M_\varphi)$. 
Actually, since $[\sigma]\in H^1(S^1;\ZZ)$, $[\theta]\in H^1(M_\varphi;\ZZ)$ and $[\theta]$ itself gives the map $\pi$ under the usual correspondence $H^1(M_\varphi,\ZZ)\equiv [M_\varphi,S^1]$.

\begin{definition}
 Let $(M,g)$ be a Riemannian manifold and let $\varphi\colon M\to M$ be an isometry. We call $M_\varphi$ the \emph{metric mapping torus} of $(M,g,\varphi)$.
\end{definition}

A metric mapping torus $M_\varphi$ has a natural Riemannian metric, best described if one thinks of $M_\varphi$ as a quotient of $M\times\RR$. Indeed, consider the product metric $\tilde{h}=g+d t^2$ on $M\times\RR$. 
Then, since $\ZZ$ acts by isometries on $(M\times\RR,\tilde{h})$, the metric $\tilde{h}$ descends to a metric $h$ on the mapping torus $M_\varphi$. The vector field $\partial_t$ on $M\times\RR$ maps to the tangent vector field to 
$S^1$ under the map $M\times\RR\to M_\varphi\stackrel{\pi}{\to} S^1$. Moreover, the 1-form $\theta$ is unitary and parallel. Thus, if we consider the standard Riemannian metric on $S^1$, $\pi\colon M_\varphi\to S^1$ is a Riemannian 
submersion with totally geodesic fibers.

\begin{definition}
 We call $h$ the \emph{adapted metric} on the metric mapping torus $M_\varphi$.
\end{definition}

Let $(g,J,\omega)$ be a Hermitian structure on a manifold $V$ of dimension $2n+2$, $n\geq 1$. Associated to it is the \emph{Lee form}, defined by
\[
 \theta=-\frac{1}{n}\delta\omega\circ J;
\]
here $\delta$ is the co-differential. The Hermitian structure $(g,J,\omega)$ is \emph{K\"ahler} if $\omega$ is parallel. In particular, $\theta=0$ in this case. The structure $(g,J,\omega)$ is \emph{Vaisman} if the Lee form is 
non-zero and parallel and $d\omega=\theta\wedge\omega$. In fact, we will assume, without the loss of generality, that $\theta$ is unitary. Thus, a Vaisman structure is a particular case of a \emph{locally conformal K\"ahler} structure, 
where $\theta$ is only required to be closed with $d\omega=\theta\wedge\omega$. Note that, if $n\geq 2$, the last condition implies the closedness of the Lee form. Locally conformal K\"ahler geometry is a very active area of research 
(see \cite{DO,OV,OV2}) and has recently attracted interest in Physics (see \cite{Shahbazi1,Shahbazi2}). 
Locally conformal K\"ahler manifolds with parallel Lee $1$-form were studied for the first time by Vaisman in \cite{Vaisman1}. In a subsequent paper (see \cite{Vaisman}), Vaisman discussed these structures under the name of 
\emph{generalized Hopf structures}. Indeed, the main example of a compact Vaisman manifold is the Hopf manifold $S^{2n+1}\times S^1$.

Let $S$ be an odd-dimensional manifold. Consider an almost contact metric structure $(\xi,\eta,g,\phi)$ on $S$ and let $\Omega$ be the K\"ahler form\footnote{Notice that we denote $\omega$ the K\"ahler form of a Hermitian
structure and $\Omega$ the K\"ahler form of an almost contact metric structure.} (see \cite{Blair} for an exposition on almost contact metric geometry).
The structure is \emph{coK\"ahler} if $\nabla\Omega=0$; it is possible to show that $\nabla\eta=0$ in this case, hence, in particular, $d\eta=0$.
\begin{example}\label{ex:1}
 If $(g,J,\omega)$ is a K\"ahler structure on a manifold $K$ and $\varphi\colon K\to K$ is a holomorphic isometry, then $K_\varphi$ is a so-called K\"ahler mapping torus, providing an example of a coK\"ahler manifold. 
 The metric on $K_\varphi$ is the adapted one. Note that, in this case, the parallel 1-form giving the map $K_\varphi\to S^1$ is $[\eta]\in H^1(K_\varphi;\ZZ)$.
 Conversely, a compact coK\"ahler manifold is diffeomorphic to a K\"ahler mapping torus (see \cite{Li}).
\end{example}

An almost contact metric structure $(\xi,\eta,g,\phi)$ is \emph{Sasakian} if $d\eta=\Omega$ and $N_\phi+2d\eta\otimes\xi=0$, where $N_\phi$ is the Nijenhuis torsion of $\phi$. Let $S$ be a manifold endowed with a Sasakian structure. 
A diffeomorphism $\varphi\colon S\to S$ is a \emph{Sasakian automorphism} if $\varphi^*\eta=\eta$ and $\varphi^*g=g$.

A Sasakian manifold $S$ is endowed with a 1-dimensional foliation $\cF_\xi$, the \emph{characteristic foliation}, whose tangent sheaf is generated by $\xi$. The foliation $\cF_\xi$ is Riemannian and transversally K\"ahler, 
see \cite[Section 7.2]{BoGa}. A $p$-form $\alpha\in\Omega^p(S)$ is \emph{basic} if $\imath_\xi\alpha=0$ and $\imath_\xi d\alpha=0$. We denote basic forms by $\Omega^*(\cF_\xi)$; this is a differential subalgebra of $\Omega^*(S)$. 
Its cohomology, called the \emph{basic cohomology} of $\cF_\xi$, is denoted $H^*(\cF_\xi)$. We collect the most relevant features of the basic cohomology:

\begin{theorem}[{\cite[Proposition 7.2.3 and Theorem 7.2.9]{BoGa}}]\label{basic_cohomology}
 Let $S$ be a compact manifold of dimension $2n+1$ endowed with a Sasakian structure $(\xi,\eta,g,\phi)$. Then:
 \begin{itemize}
  \item the groups $H^p(\cF_\xi)$ are finite dimensional, $H^{2n}(\cF_\xi)\cong\RR$ and $H^p(\cF_\xi)=0$ for $p>2n$;
  \item $[d\eta]^p\in H^{2p}(\cF_\xi)$ is non-trivial for $p=1,\ldots,n$;
  \item the map $L^p\colon H^{n-p}(\cF_\xi)\to H^{n+p}(\cF_\xi)$, $[\alpha]\mapsto [(d\eta)^p\wedge\alpha]$ is an isomorphism for $0\leq p\leq n$.
 \end{itemize}
\end{theorem}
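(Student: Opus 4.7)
The plan is to reduce the four assertions to general properties of basic cohomology of transversely K\"ahler Riemannian foliations on compact manifolds. The first step is to record the relevant transverse geometry of $\cF_\xi$: since $\xi$ is Killing for $g$ (a standard consequence of the Sasakian axioms), the transverse metric $g_T$ induced by $g$ on $TS/\langle\xi\rangle$ is holonomy invariant, so $\cF_\xi$ is Riemannian; moreover $\phi$ descends to a transverse almost complex structure $J_T$, and $(g_T,J_T,\tfrac{1}{2}d\eta)$ is in fact a transverse K\"ahler structure. A short check using $\imath_\xi\eta=1$ and $\mathcal{L}_\xi\eta=0$ shows that $d\eta$ is basic, and its class $[d\eta]\in H^2(\cF_\xi)$ is (a positive multiple of) the transverse K\"ahler class.

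Once this is in place, finite-dimensionality of $H^p(\cF_\xi)$ and a basic Hodge decomposition follow from El Kacimi-Alaoui's theory: one constructs a basic Laplacian on $\Omega^*(\cF_\xi)$ whose degree-$p$ kernel is finite-dimensional and computes $H^p(\cF_\xi)$. The vanishing $H^p(\cF_\xi)=0$ for $p>2n$ is then immediate, since the transverse dimension is $2n$. For $H^{2n}(\cF_\xi)\cong\RR$ I would invoke the tautness of $\cF_\xi$: the flow of the unit Killing vector field $\xi$ is taut, with vanishing basic mean curvature, so transverse Poincar\'e duality holds. Concretely, the pairing $[\alpha]\otimes[\beta]\mapsto\int_S\eta\wedge\alpha\wedge\beta$ identifies $H^{2n-p}(\cF_\xi)$ with $H^{p}(\cF_\xi)^{*}$, and the top group is one-dimensional.

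Non-triviality of $[d\eta]^p$ for $1\leq p\leq n$ is then a one-line consequence. Up to a positive constant, $\eta\wedge(d\eta)^n$ is the Riemannian volume form on $S$, so $\int_S\eta\wedge(d\eta)^n>0$, giving $[(d\eta)^n]\neq 0$ in $H^{2n}(\cF_\xi)$. If $[(d\eta)^p]=0$ for some $1\leq p\leq n$, then $[(d\eta)^n]=[(d\eta)^p]\cdot[(d\eta)^{n-p}]=0$, a contradiction.

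The hard Lefschetz statement, i.e.\ the bijectivity of $L^p$, is the main obstacle. For compact K\"ahler manifolds the usual route is via the K\"ahler identities $[\Lambda,L]=H$, which produce an $\mathfrak{sl}_2(\RR)$-action on cohomology and, via standard highest-weight arguments, the Lefschetz isomorphisms. The plan is to transcribe this strategy to the basic setting, again following El Kacimi-Alaoui: define the basic operators $L$ and $\Lambda$, the basic codifferential $\delta_B$ and the basic Laplacian on $\Omega^*(\cF_\xi)$, verify the transverse K\"ahler identities pointwise on the normal bundle, and deduce an $\mathfrak{sl}_2(\RR)$-representation on basic harmonic forms and hence on $H^*(\cF_\xi)$. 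Verifying that the transverse K\"ahler identities really pass through the basic Hodge theory, and in particular that $\delta_B$ is the honest Hilbert-space adjoint of $d$ on basic forms once the mean curvature correction is taken into account, is the technically delicate point.
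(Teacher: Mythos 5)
The paper does not prove this statement at all: it is quoted verbatim from Boyer--Galicki (Proposition 7.2.3 and Theorem 7.2.9 of \emph{Sasakian geometry}), so there is no in-paper argument to compare against. Your sketch is essentially the proof given in that cited source: transverse K\"ahlerity of $\cF_\xi$ with basic K\"ahler class $[d\eta]$, El Kacimi-Alaoui's basic Hodge theory for finite-dimensionality, tautness of the geodesic Reeb flow giving transverse Poincar\'e duality and $H^{2n}(\cF_\xi)\cong\RR$, positivity of $\int_S\eta\wedge(d\eta)^n$ for the non-vanishing of the powers of $[d\eta]$, and the transverse hard Lefschetz theorem via the $\mathfrak{sl}_2$-action on basic harmonic forms. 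As an outline it is correct and correctly identifies the one genuinely delicate point (compatibility of the transverse K\"ahler identities with the basic adjoint $\delta_B$ and the mean curvature term), which is exactly what the literature you would need to invoke (El Kacimi-Alaoui) settles.
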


Recall that a connected commutative differential graded algebra $(A,d)$ is \emph{cohomologically K\"ahlerian} if its cohomological dimension is even (say $2n$), it satisfies Poincar\'e duality and there exists a 2-cocycle $\omega$ 
such that the map $H^{n-p}(A)\to H^{n+p}(A)$, $[\alpha]\mapsto[\omega^{n-p}\wedge\alpha]$, is an isomorphism for $0\leq p\leq n$. As a consequence of Theorem \ref{basic_cohomology}, $H^*(\cF_\xi)$, considered as a 
commutative differential graded algebra with trivial differential, is cohomologically K\"ahlerian.

\begin{example}\label{ex:2}
 Let $(\xi,\eta,g,\phi)$ be a Sasakian structure on a manifold $S$ and let $\varphi\colon S\to S$ be a Sasakian automorphism. The mapping torus $S_\varphi$ has a Vaisman structure with the adapted metric.
 In \cite[Structure Theorem]{OV}, Ornea and Verbitsky claimed that every compact manifold endowed with a Vaisman structure is the mapping torus of a Sasakian manifold and a Sasakian automorphism. 
 In \cite{OV3} they argued that this is actually imprecise, but provided a modified version of this statement. In \cite[Corollary 3.5]{OV3}, they proved that if a compact manifold $V$ admits a Vaisman structure, then $V$ admits 
 another Vaisman structure which arises as mapping torus of a Sasakian manifold and a Sasakian automorphism. 
 Thus, up to diffeomorphism, every compact Vaisman manifold is the mapping torus of a Sasakian manifold and a Sasakian automorphism.
 \end{example}

\section{Main result}\label{sec:3}

Consider a metric mapping torus with adapted metric, $(M_\varphi,h)$, and let $\theta\in\Omega^1(M_\varphi)$ be the closed 1-form described in Section \ref{Sec:1}. In Examples \ref{ex:1} and \ref{ex:2}, $\theta$ is not only a 
closed form, but it is also unitary and \emph{parallel} with respect to the Levi-Civita connection of the adapted metric. More generally, we can suppose we are given a mapping torus $M_\varphi$ with a Riemannian metric $\bar{h}$
such that $\theta\in\Omega^1(M_\varphi)$ is a parallel 1-form. In such a case, $M_\varphi$ is locally isometric to the product $M\times\RR$ and it follows that $\theta$ is unitary and parallel.
There is therefore no loss of generality in assuming that $\bar{h}$ is the adapted metric.

We prove the following result:

\begin{theorem}\label{theo:1}
 Let $(M,g)$ be a compact Riemannian manifold, let $\varphi\colon M\to M$ be an isometry and let $(M_\varphi,h)$ be the mapping torus with the adapted metric. Let $\theta\in\Omega^1(M_\varphi)$ be the 
 unitary and parallel 1-form. Then there is a finite cover $p\colon M\times S^1\to M_\varphi$, the deck 
 group is isomorphic to a finite group $\ZZ_m$, for some $m>0$, and acts diagonally and by translations on the second factor. We have a diagram of fiber bundles
 \[
  \xymatrix{
  M\ar[r]\ar[d]_{\cong} & M\times S^1\ar[r]\ar[d]^p & S^1\ar[d]^{\cdot m}\\
  M\ar[r] & M_\varphi\ar[r] & S^1
  }
 \]
 and $M_\varphi$ fibers over the circle $S^1/\ZZ_m$ with finite structure group $\ZZ_m$.
\end{theorem}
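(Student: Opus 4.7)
The plan is to leverage the fact that, by the Myers--Steenrod theorem, $\mathrm{Isom}(M,g)$ is a compact Lie group, so the component group $\pi_0(\mathrm{Isom}(M,g))$ is finite. Let $m$ be the order of the class $[\varphi]$ in $\pi_0(\mathrm{Isom}(M,g))$; then $\varphi^m$ belongs to the path-connected identity component $\mathrm{Isom}_0(M,g)$. Pulling back the bundle $\pi\colon M_\varphi\to S^1$ along the $m$-fold cover $S^1\to S^1$ produces the mapping torus $M_{\varphi^m}$ as an $m$-fold Riemannian cover of $M_\varphi$, and by the characterization of trivial bundles recalled in Section \ref{Sec:1}, this cover is diffeomorphic to $M\times S^1$ since $\varphi^m$ lies in the identity component of the diffeomorphism group.

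The subtle point is to identify the $\ZZ_m$ deck action with a diagonal one. First I would locate, inside the component of $\varphi$, an isometry $\beta$ satisfying $\beta^m=\mathrm{id}$. The trick is to work inside the topological closure $H=\overline{\langle\varphi\rangle}\subset\mathrm{Isom}(M,g)$: as a compact \emph{abelian} Lie group, $H$ has torus identity component $H_0$, cyclic component group, and surjective exponential map $\mathrm{Lie}(H_0)\to H_0$. A short argument shows $H/H_0$ is cyclic of order exactly $m$, so $\varphi^m\in H_0$ and we may write $\varphi^m=\exp(X)$ for some $X\in\mathrm{Lie}(H_0)$. Setting $\beta=\varphi\cdot\exp(-X/m)$, abelianness of $H$ gives $\beta^m=\mathrm{id}$, while $\beta\in\varphi\,H_0$ places $\beta$ in the component of $\varphi$.

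Next, since $\mathrm{Isom}_0(M,g)$ is path-connected and contains $\varphi\beta^{-1}$, I would choose a smooth path $\psi\colon[0,1]\to\mathrm{Isom}(M,g)$ with $\psi_0=\mathrm{id}$ and $\psi_1=\varphi\beta^{-1}$, constant near the endpoints, and extend it to $\psi\colon\RR\to\mathrm{Isom}(M,g)$ by iterating the relation $\psi_{t+1}=\varphi\,\psi_t\,\beta^{-1}$ (consistent with $\psi_{t+m}=\varphi^m\psi_t$ thanks to $\beta^m=\mathrm{id}$). A direct verification shows that the diffeomorphism $\Theta(x,t)=(\psi_t^{-1}(x),t)$ of $M\times\RR$ conjugates the generating $\ZZ$-action $(x,t)\mapsto(\varphi(x),t+1)$ defining $M_\varphi$ into the diagonal action $(y,t)\mapsto(\beta(y),t+1)$, and restricts on the index-$m$ subgroup $m\ZZ$ to pure translation. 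Passing to quotients, $\Theta$ descends to the desired diffeomorphism $M_{\varphi^m}\cong M\times(\RR/m\ZZ)=M\times S^1$ on which the residual $\ZZ_m$ acts diagonally; after rescaling the circle to unit length the deck generator becomes $(y,t)\mapsto(\beta(y),t+1/m)$, and the diagram of fiber bundles in the statement follows.

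The principal obstacle is the construction of $\beta$: a class of finite order in $\pi_0$ of a compact Lie group need not admit a lift of the same order through a general extension, and so the reduction to the abelian closure $H$ (where exponential is surjective and $(ab)^m=a^mb^m$) is the essential move that makes the diagonal description possible.
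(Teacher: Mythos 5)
Your argument is correct in substance but follows a genuinely different route from the one the paper carries out in the proof of Theorem \ref{theo:1}: there, the authors dualize $\theta$ to a unitary parallel (hence Killing) vector field $\upsilon$ on $M_\varphi$, take the closure of its flow in the compact group $\mathrm{Isom}(M_\varphi,h)$ to obtain a torus, perturb $\upsilon$ within its Lie algebra to a vector field generating a circle action, prove that this action is homologically injective by integrating $\theta$ over an orbit, and then invoke the splitting results of Conner--Raymond and Sadowski for homologically injective toral actions. Your proof instead works entirely inside $\mathrm{Isom}(M,g)$ of the \emph{fiber}: finiteness of $\pi_0(\mathrm{Isom}(M,g))$ gives $\varphi^m\in\mathrm{Isom}_0(M,g)$, the pullback of $M_\varphi\to S^1$ along $z\mapsto z^m$ becomes trivial, and the abelian closure $H=\overline{\langle\varphi\rangle}$ is used to manufacture a finite-order isometry $\beta$ isotopic to $\varphi$, so that the deck action is conjugated (via your $\Theta$) into a genuinely diagonal one. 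This is precisely the ``easier proof'' the paper records in Remark \ref{Kotschick} and attributes to Kotschick without carrying out; your construction of $\beta$ and of the conjugating diffeomorphism supplies the details that the remark elides, and is in fact more explicit about diagonality than either argument in the paper. What the paper's Conner--Raymond/Sadowski route buys is generality and the rational-homotopy applications pursued in \cite{BO}; what yours buys is elementarity and an explicit deck transformation.

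One correction is needed. It is not true in general that $H/H_0$ has order equal to the order $m$ of $[\varphi]$ in $\pi_0(\mathrm{Isom}(M,g))$; that order is only a multiple of $m$. For instance, if $M=S^1$ with the round metric and $\varphi$ is rotation by $\pi$, then $\varphi\in\mathrm{Isom}_0(M,g)$ so $m=1$, yet $H=\{\mathrm{id},\varphi\}$ is already closed, $H_0=\{\mathrm{id}\}$, and $H/H_0\cong\ZZ_2$. With your definition of $m$ the assertion ``$\varphi^m\in H_0$'' therefore fails, and $\exp(X)$ is unavailable. The fix is harmless: define $m$ from the outset as the order of $[\varphi]$ in $H/H_0$, i.e.\ the smallest $k$ with $\varphi^k\in H_0$. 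Since $H_0$ is connected and contains the identity, $H_0\subset\mathrm{Isom}_0(M,g)$, so this $m$ still satisfies $\varphi^m\in\mathrm{Isom}_0(M,g)$ and the pullback argument, the surjectivity of $\exp$ on the torus $H_0$, and every subsequent step go through verbatim; the theorem only asserts the existence of \emph{some} $m>0$. (Only the minimality claim in Corollary \ref{cor:1} would be sensitive to this distinction, not the theorem you are proving.)
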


\begin{proof}
 The proof is basically a reproduction of the argument used in \cite{BO} for coK\"ahler manifolds. We recall it briefly. The first step is to notice that $\upsilon\in \fX(M_\varphi)$, the metric dual of the 1-form $\theta$, 
 is a unitary and parallel vector field and, in particular, Killing. If the metric $h$ happens to be adapted, then $\upsilon$ is the image under the derivative of the projection $M\times\RR\to M_\varphi$ of the vector field $\partial_t$.
 By the Myers-Steenrod theorem (see \cite{Myers-Steenrod}), $\mathrm{Isom}(M_\varphi,h)$ is compact, 
 so the closure of the flow of $\upsilon$ in $\mathrm{Isom}(M_\varphi,h)$ is a torus $T$. This gives a free $T$-action on $M_\varphi$. Choose a vector field $\hat{\upsilon}$ in the Lie algebra of $T$, close enough to $\upsilon$, 
 and such that $\hat{\upsilon}$ generates a circle action on $M_\varphi$. At some point $x_0\in M_\varphi$ we surely have $\theta(\hat{\upsilon})(x_0)\neq 0$, since $\theta(\upsilon)(x_0)\neq 0$. But being $\theta$ harmonic and 
 $\hat{\upsilon}$ Killing, this implies that $\theta(\hat{\upsilon})\neq 0$. We assume henceforth that $\theta(\hat{\upsilon})>0$ and denote by $\tau\subset M_\varphi$ an orbit of this $S^1$-action. Then
 \begin{equation}\label{eq:1}
  \int_\tau\theta=\int_0^1\theta\left(\frac{d\tau}{dt}\right)dt=\int_0^1\theta(\hat{\upsilon})dt>0.
 \end{equation}
 Consider the orbit map $\alpha\colon S^1\to M_\varphi$, $g\mapsto g\cdot x_0$ and the composition
 \[
  H_1(S^1;\ZZ)\xrightarrow{\alpha_\ast} H_1(M_\varphi;\ZZ)\xrightarrow{\pi_\ast} H_1(S^1;\ZZ).
 \]
 We remarked above that, under the correspondence $H^1(M_\varphi;\ZZ)\equiv [M_\varphi,S^1]$, $\pi$ is given by $\theta$; thus \eqref{eq:1} tells us that $\pi_*$ is non-zero when evaluated on an element of $H_1(M_\varphi;\ZZ)$ coming 
 from the orbit map. Since $H_1(S^1;\ZZ)=\ZZ$, this means that $\alpha_*$ is injective. We conclude that the $S^1$-action is homologically injective.
 
 The second step consists in relating the homological injectivity of this $S^1$-action with the reduction of the structure group of the bundle $M_\varphi\to S^1$ to a finite group and the existence of a finite cover of $M_\varphi$ with 
 the desired properties. This uses the more general notion of transversal equivariance of a fibration over a torus with respect to a smooth torus action, developed by Sadowski in \cite{Sadowski}. 
 These ideas were developed first in the topological context by Conner and Raymond, see \cite{Conner-Raymond}. We refer to \cite{BO} for a detailed explanation of the result.
 \end{proof}
 
 \begin{remark}\label{Kotschick}
  Prof.\ Dieter Kotschick has suggested to us that Theorem \ref{theo:1} can be proven in an easier way, without appealing to the results of Conner - Raymond and Sadowski. 
  Indeed, by the Myers-Steenrod theorem, the isometry group $\mathrm{Isom}(M,g)$ of a 
  compact Riemannian manifold $(M,g)$ is a compact Lie group; in particular, it has a finite number of connected components. This implies that if $\varphi$ is an isometry of $(M,g)$, there exists an integer $m>0$ such that 
  $\varphi^m$ belongs to the connected component of the identity $\mathrm{Isom}_0(M,g)$ of $\mathrm{Isom}(M, g)$. Indeed, if $\varphi^n \not\in \mathrm{Isom}_0(M, g)$, for every integer $n>0$, 
  then $[\varphi^n] \neq [\varphi^m]$ in the quotient group $\mathrm{Isom}(M,g)/\mathrm{Isom}_0(M, g)$, for $n \neq m$. However, this is not possible, since $\mathrm{Isom}(M,g)/\mathrm{Isom}_0(M, g)$ is just the finite group of 
  connected components of $\mathrm{Isom}(M,g)$. Consider now the map $\gamma_m\colon S^1\to S^1$ given by $\gamma_m(z)=z^m$ and use it to pull back to the first $S^1$ the fiber bundle 
  $M\to M_\varphi\to S^1$. It is clear that the structure group of $\gamma_m^*M_\varphi$ is generated by $\varphi^m$, hence $\gamma_m^*M_\varphi\cong M\times S^1$. This gives the splitting up to final cover. One also obtains 
  an action of the finite group $\mathbb{Z}_m$, generated by the isotopy class of $\varphi$, on the product $M\times S^1$, which is diagonal and by translations on $S^1$. 
  In \cite{BO} we overlooked this simple approach and appealed rather to the techniques of Conner - Raymond and Sadowski since our main goal was to investigate the rational-homotopic properties of compact coK\"ahler manifolds. Among the 
  outcomes of this research, we quote the proof of the toral rank conjecture for compact coK\"ahler manifolds (see \cite{BLO}).
 \end{remark}

 \begin{corollary}\label{cor:1}
  Let $(M,g)$ be a compact Riemannian manifold, let $\varphi\colon M\to M$ be an isometry and let $(M_\varphi,h)$ be the mapping torus with the adapted metric. Then
  \[
 H^*(M_\varphi;\RR)\cong H^*(M;\RR)^G\otimes H^*(S^1;\RR),
  \]
  where $G\cong \ZZ_m$ and $m$ is the smallest positive integer such that $\varphi^m \in \mathrm{Isom}_0(M, g)$.
 \end{corollary}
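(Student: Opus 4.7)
The plan is to combine Theorem \ref{theo:1} with two standard tools: the transfer isomorphism for finite covers in characteristic zero, and the K\"unneth formula.

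First, Theorem \ref{theo:1} furnishes a finite regular cover $p\colon M\times S^1\to M_\varphi$ with deck group $G\cong\ZZ_m$. Since we use real coefficients, the transfer map yields
\[
 H^*(M_\varphi;\RR)\cong H^*(M\times S^1;\RR)^G,
\]
the $G$-invariant part of the cohomology of the total space. Applying the K\"unneth formula,
\[
 H^*(M\times S^1;\RR)\cong H^*(M;\RR)\otimes H^*(S^1;\RR),
\]
and since $G$ acts diagonally on $M\times S^1$, it acts diagonally on the right-hand side.

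Next I would analyze the two factors separately. On the $S^1$-factor, $G$ acts by translations; each translation is isotopic (indeed, homotopic through translations) to the identity, hence it induces the identity on $H^*(S^1;\RR)$. On the $M$-factor, a generator of $G$ acts via (the isotopy class of) $\varphi$, so $G$ acts on $H^*(M;\RR)$ through the cyclic group $\langle\varphi^*\rangle$. Note that because $\varphi^m\in\mathrm{Isom}_0(M,g)$ is isotopic to the identity, $(\varphi^*)^m=\mathrm{id}$ on $H^*(M;\RR)$, so this action indeed factors through $\ZZ_m$ and the notation $H^*(M;\RR)^G$ is unambiguous.

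Finally, when a group $G$ acts diagonally on a tensor product $A\otimes B$ and acts trivially on $B$, a direct check shows $(A\otimes B)^G=A^G\otimes B$. Combining this with the two steps above yields
\[
 H^*(M_\varphi;\RR)\cong\bigl(H^*(M;\RR)\otimes H^*(S^1;\RR)\bigr)^G\cong H^*(M;\RR)^G\otimes H^*(S^1;\RR),
\]
as claimed. No step looks truly hard; the only points requiring attention are verifying that $S^1$-translations act trivially on $H^*(S^1;\RR)$ (which is immediate by homotopy invariance) and invoking the transfer principle correctly, which is why real (rather than integer) coefficients are essential.
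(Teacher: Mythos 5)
Your proposal is correct and follows essentially the same route as the paper: the paper's entire proof is the one-line invocation of the transfer isomorphism $H^*(X;\RR)\cong H^*(\tilde{X};\RR)^G$ for a finite $G$-cover, leaving the K\"unneth step, the triviality of the translation action on $H^*(S^1;\RR)$, and the identity $(A\otimes B)^G=A^G\otimes B$ implicit. You have simply made those omitted steps explicit, and all of them check out.
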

 
 \begin{proof}
  For a finite $G$-cover $\tilde{X}\to X$, one has $H^*(X;\RR)\cong H^*(\tilde{X};\RR)^G$.
 \end{proof}

 \begin{corollary}\label{cor:2}
  Let $V$ be a compact Vaisman manifold. Then there exists a finite cover $p\colon S\times S^1\to V$, where $S$ is a compact Sasakian manifold, the deck group is isomorphic to $\ZZ_m$, for some $m>0$, acts diagonally and by 
  translations on the second factor. We have a diagram of fiber bundles
  \[
  \xymatrix{
  S\ar[r]\ar[d]_{\cong} & S\times S^1\ar[r]\ar[d]^p & S^1\ar[d]^{\cdot m}\\
  S\ar[r] & V\ar[r] & S^1
  }
 \]
 and $V$ fibers over the circle $S^1/\ZZ_m$ with finite structure group $\ZZ_m$.
 \end{corollary}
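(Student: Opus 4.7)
The plan is to reduce Corollary \ref{cor:2} directly to Theorem \ref{theo:1}, using Example \ref{ex:2} to equip a compact Vaisman manifold with the structure of a metric mapping torus. First, I would invoke Example \ref{ex:2}: up to diffeomorphism, any compact Vaisman manifold $V$ has the form $S_\varphi$, where $(S,\xi,\eta,g,\phi)$ is a compact Sasakian manifold and $\varphi\colon S\to S$ is a Sasakian automorphism. By the definition recalled in Section 2, a Sasakian automorphism satisfies $\varphi^*g=g$, so $\varphi$ is an isometry of the compact Riemannian manifold $(S,g)$, and the Vaisman metric on $V\cong S_\varphi$ coincides with the adapted metric $h$ of the metric mapping torus. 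This is the only geometric input needed.

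Second, I would apply Theorem \ref{theo:1} verbatim with $(M,g)=(S,g)$ and the same $\varphi$. This immediately supplies a finite cover $p\colon S\times S^1\to S_\varphi\cong V$ whose deck group is isomorphic to $\ZZ_m$ and acts diagonally, by translations on the second factor, together with the commutative diagram of fiber bundles in the statement and the factorization of the projection $V\to S^1$ through $S^1/\ZZ_m$. By Remark \ref{Kotschick}, the integer $m$ may be taken as the smallest positive integer such that $\varphi^m\in \mathrm{Isom}_0(S,g)$, which gives a concrete description.

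Because Theorem \ref{theo:1} already encodes the entire reduction-of-structure-group argument and Example \ref{ex:2} produces the mapping-torus presentation, there is essentially no obstacle; the argument is a one-line combination. The only subtlety worth flagging is that Example \ref{ex:2}, via \cite[Corollary 3.5]{OV3}, may require replacing the original Vaisman structure on $V$ with another Vaisman structure on the same underlying manifold in order to write $V\cong S_\varphi$. This does not affect the conclusion, since Corollary \ref{cor:2} asserts a purely smooth, diffeomorphism-level splitting of $V$, and both the finite cover and the deck-group description depend only on the underlying smooth mapping-torus structure coming from the isometry $\varphi$.
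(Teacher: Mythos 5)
Your proposal is correct and follows essentially the same route as the paper: invoke \cite[Corollary 3.5]{OV3} to write $V$ (up to diffeomorphism) as a mapping torus $S_\varphi$ of a Sasakian automorphism, observe that $\varphi$ is an isometry so that Theorem \ref{theo:1} applies with the adapted metric, and read off the finite cover and the diagram. Your closing caveat about possibly replacing the Vaisman structure is exactly the point the paper handles via Example \ref{ex:2}, so nothing is missing.
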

 
 \begin{proof}
  By the aforementioned result of Ornea and Verbitsky (see \cite[Corollary 3.5]{OV3}), $V$ is diffeomorphic to a mapping torus $S_\varphi$ where $S$ is a compact Sasakian manifold and $\varphi\colon S\to S$ is a Sasakian automorphism. 
  Under this identification, the Lee form $\theta$, which is parallel by definition on a Vaisman manifold, gives the projection $V\cong S_\varphi\to S^1$. It is now enough to apply Theorem \ref{theo:1}.
 \end{proof}
 
 In the next corollary, we show how to apply our splitting theorem to obtain well-known results on the topology of compact Vaisman manifolds. Compare \cite{Vaisman}.
 
 \begin{corollary}\label{cor:3}
  Let $V$ be a compact connected Vaisman manifold of dimension $2n+2$ and let $b_r(V)$ be the $r^{th}$ Betti number of $V$. Then $b_p(V)-b_{p-1}(V)$ is even for $p$ odd and $1\leq p\leq n$. In particular, $b_1(V)$ is odd.
 \end{corollary}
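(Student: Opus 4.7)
The plan is to combine Corollary \ref{cor:2} with the basic-cohomology structure from Theorem \ref{basic_cohomology}. By Corollary \ref{cor:2}, $V$ is finitely covered by $S\times S^1$ with deck group $G\cong\ZZ_m$ acting diagonally and by translations on the circle factor. Since $G$ therefore acts trivially on $H^*(S^1;\RR)$, the K\"unneth formula together with Corollary \ref{cor:1} yields
\[
 b_p(V)=\dim H^p(S)^G+\dim H^{p-1}(S)^G,
\]
so that $b_p(V)-b_{p-1}(V)=\dim H^p(S)^G-\dim H^{p-2}(S)^G$. The problem reduces to controlling the parity of this difference.

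Next, I would push the computation down to basic cohomology via the Gysin-type long exact sequence of the characteristic foliation $\cF_\xi$,
\[
 \cdots\to H^{p-2}(\cF_\xi)\xrightarrow{\wedge[d\eta]}H^p(\cF_\xi)\to H^p(S)\to H^{p-1}(\cF_\xi)\xrightarrow{\wedge[d\eta]}H^{p+1}(\cF_\xi)\to\cdots,
\]
which is $G$-equivariant since $\varphi^*\eta=\eta$. Theorem \ref{basic_cohomology} shows that $\wedge[d\eta]\colon H^k(\cF_\xi)\to H^{k+2}(\cF_\xi)$ is injective for $k\le n-1$. Taking $G$-invariants, which is exact since $|G|<\infty$ and we work over $\RR$, the sequence collapses for $p\le n$ to
\[
 0\to H^{p-2}(\cF_\xi)^G\xrightarrow{\wedge[d\eta]}H^p(\cF_\xi)^G\to H^p(S)^G\to 0,
\]
so $\dim H^p(S)^G=\dim H^p(\cF_\xi)^G-\dim H^{p-2}(\cF_\xi)^G$.

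The transverse K\"ahler structure on $\cF_\xi$ provides a Hodge bigrading $H^r(\cF_\xi)=\bigoplus_{p+q=r}H^{p,q}(\cF_\xi)$ with $H^{p,q}(\cF_\xi)\cong\overline{H^{q,p}(\cF_\xi)}$. The Sasakian automorphism $\varphi$ preserves the bigrading and commutes with complex conjugation, hence $\dim H^{p,q}(\cF_\xi)^G=\dim H^{q,p}(\cF_\xi)^G$; pairing $(p,q)\ne(q,p)$ for $r$ odd then makes $\dim H^r(\cF_\xi)^G$ even. Combining everything, for $p$ odd with $1\le p\le n$,
\[
 b_p(V)-b_{p-1}(V)=\dim H^p(\cF_\xi)^G-2\dim H^{p-2}(\cF_\xi)^G+\dim H^{p-4}(\cF_\xi)^G
\]
(with $H^k(\cF_\xi)^G=0$ for $k<0$), and each summand is even since $p$ and $p-4$ are odd. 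Specializing to $p=1$ gives $b_1(V)=1+\dim H^1(\cF_\xi)^G$, which is odd. The main obstacle is keeping careful track of $G$-equivariance throughout, namely that a Sasakian automorphism preserves both the Lefschetz operator $\wedge[d\eta]$ and the transverse K\"ahler bigrading; both follow from $\varphi^*\eta=\eta$ and $\varphi^*g=g$.
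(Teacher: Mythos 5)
Your argument is correct and follows essentially the same route as the paper: the identity $b_p(V)=\dim H^p(S;\RR)^G+\dim H^{p-1}(S;\RR)^G$ from the finite cover, the $G$-equivariant Gysin sequence relating $H^*(S;\RR)$ to the basic cohomology $H^*(\cF_\xi)$, injectivity of multiplication by $[d\eta]$ in degrees up to $n-1$, and the same final formula $b_p(V)-b_{p-1}(V)=\dim H^p(\cF_\xi)^G-2\dim H^{p-2}(\cF_\xi)^G+\dim H^{p-4}(\cF_\xi)^G$. The only (minor, and equally valid) divergence is the justification that $\dim H^r(\cF_\xi)^G$ is even for $r$ odd: you use the transverse Hodge bigrading and conjugation symmetry, while the paper cites \cite[Proposition 2.3]{BLO} to conclude that $H^*(\cF_\xi)^G$ is again cohomologically K\"ahlerian, whence evenness follows from the nondegenerate skew-symmetric Lefschetz pairing in odd degrees.
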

 \begin{proof}
  By \cite[Corollary 3.5]{OV3}, $V$ is diffeomorphic to a mapping torus $S_\varphi$, where $\varphi$ is an automorphism of the compact Sasakian manifold $S$. 
  Let $(\xi,\eta,g,\phi)$ be the Sasakian structure of $S$ and let $\cF_\xi$ denote the characteristic foliation; then $(H^*(\cF_\xi),0)$ is a cohomologically K\"ahlerian algebra with 
  K\"ahler class $[d\eta]\in H^2(\cF_\xi)$. If $G\cong\ZZ_m$ is the finite structure group of the finite cover $p\colon S\times S^1\to S_{\varphi}$, then the $G$-action preserves $\cF_\xi$, since $\varphi$, which generates the structure group of the mapping torus, is a Sasakian automorphism.
  Hence we can consider the invariant basic cohomology $H^*(\cF_\xi)^G$.
  For the basic K\"ahler class, it holds $[d\eta]\in H^2(\cF_\xi)^G$. 
  There is an exact sequence (see \cite[page 215]{BoGa})
  \begin{equation}\label{ses:1}
   \cdots\to H^p(S;\RR)\to H^{p-1}(\cF_\xi)\xrightarrow{e} H^{p+1}(\cF_\xi)\to H^{p+1}(S;\RR)\to\cdots
  \end{equation}
  where $e$ is the multiplication by the basic K\"ahler class $[d\eta]$. Since $\dim V=2n+2$, the cohomological dimension of $H^*(\cF_\xi)$ is $2n$. This implies that the map 
  $e\colon H^{p-1}(\cF_\xi)\to H^{p+1}(\cF_\xi)$ is injective for $0\leq p\leq n$, hence \eqref{ses:1} splits and gives short exact sequences
  \begin{equation}\label{ses:2}
   0\to H^{p-1}(\cF_\xi)\xrightarrow{e} H^{p+1}(\cF_\xi)\to H^{p+1}(S;\RR)\to 0, \quad 0 \leq p\leq n.
  \end{equation}
  Notice that \eqref{ses:2} is a short exact sequence of $G$-modules.
  Since we are working with real coefficients, every term in \eqref{ses:2} is a real vector space.
  By taking invariants, we obtain short exact sequences
  \begin{equation}\label{ses:3}
   0\to H^{p-1}(\cF_\xi)^G\to H^{p+1}(\cF_\xi)^G\to H^{p+1}(S;\RR)^G\to 0, \quad 0 \leq p\leq n;
  \end{equation}
  the surjectivity of $H^{p+1}(\cF_\xi)^G\to H^{p+1}(S;\RR)^G$ follows by averaging over $G$. By \cite[Proposition 2.3]{BLO}, $H^*(\cF_\xi)^G$ is also 
  cohomologically K\"ahlerian. We now set $\bb_p(S)\coloneq \dim H^p(S;\RR)^G$ and $\bb_p(\cF_\xi)\coloneq \dim H^p(\cF_\xi)^G$; then $\bb_p(\cF_\xi)$ is even for $p$ odd. In view of Corollary \ref{cor:1}, we have, for $1\leq p\leq n$,
  \[
   b_p(V)=\bb_p(S)+\bb_{p-1}(S)=\bb_p(\cF_\xi)-\bb_{p-2}(\cF_\xi)+\bb_{p-1}(\cF_\xi)-\bb_{p-3}(\cF_\xi),
  \]
  hence $b_p(V)-b_{p-1}(V)=\bb_p(\cF_\xi)-2\bb_{p-2}(\cF_\xi)+\bb_{p-4}(\cF_\xi)$. If $p$ is odd, then $b_p(V)-b_{p-1}(V)$ is even.
 \end{proof}

 According to \cite{OV2}, the fundamental group of a compact Vaisman manifold $V$ sits in an exact sequence
 \[
  0\to G\to\pi_1(V)\to\pi_1(X)\to 0
 \]
where $\pi_1(X)$ is the fundamental group of a K\"ahler orbifold and $G$ is a quotient of $\ZZ^2$ by a subgroup of rank $\leq 1$. We give a different characterization of the fundamental group of a Vaisman manifold: 

\begin{corollary}
  Let $V$ be a compact Vaisman manifold. Then $\pi_1(V)$ has a subgroup of finite index of the form $\Gamma\times\ZZ$, where $\Gamma$ is the fundamental group of a compact Sasaki manifold.
 \end{corollary}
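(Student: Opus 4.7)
The proof will be a direct application of Corollary \ref{cor:2}, which is the key structural result already in hand. The plan is to read off the fundamental group statement from the finite cover $p\colon S\times S^1\to V$ produced there.

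First I would recall that by Corollary \ref{cor:2}, a compact Vaisman manifold $V$ admits a finite covering $p\colon S\times S^1\to V$ with $S$ a compact Sasakian manifold. Since $p$ is a finite covering map, the induced homomorphism $p_*\colon\pi_1(S\times S^1)\to\pi_1(V)$ is injective and its image has finite index equal to the degree of the cover, namely $m$. This immediately identifies $p_*(\pi_1(S\times S^1))$ as a finite-index subgroup of $\pi_1(V)$.

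Next I would compute $\pi_1(S\times S^1)\cong \pi_1(S)\times\pi_1(S^1)\cong\pi_1(S)\times\ZZ$ using the standard product formula for fundamental groups. Setting $\Gamma\coloneq \pi_1(S)$, which is by construction the fundamental group of a compact Sasakian manifold, gives precisely the decomposition claimed in the statement. There is no real obstacle here beyond invoking Corollary \ref{cor:2} and the product rule, so the argument amounts to a one-line deduction; the substantive content of the corollary is entirely packaged in the splitting theorem that precedes it.
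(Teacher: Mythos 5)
Your proposal is correct and is exactly the paper's argument: the authors' proof is the one-line remark ``It is enough to consider the finite cover $S\times S^1\to S_\varphi$,'' and you have simply spelled out the implicit steps (injectivity and finite index of $p_*$ for a finite cover, plus $\pi_1(S\times S^1)\cong\pi_1(S)\times\ZZ$). Nothing further is needed.
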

 \begin{proof}
 It is enough to consider the finite cover $S\times S^1\to S_\varphi$.
 \end{proof}
 
Sasaki groups, and their relation with K\"ahler groups, have been investigated for instance in \cite{Chen,Kasuya}.

Another application of the splitting theorem is to the group of automorphisms of a compact Sasakian manifolds. Let $S$ be a compact manifold endowed with a Sasakian structure $(\xi,\eta,g,\phi)$, let $\mathrm{Aut}(\xi,\eta,g,\phi)$ be 
the group of Sasakian automorphisms and let $\varphi\in\mathrm{Aut}(\xi,\eta,g,\phi)$. Form the mapping torus $S_\varphi$. By Corollary \ref{cor:2}, we find $m>0$ such that 
$\varphi^m\in\mathrm{Aut}_0(\xi,\eta,g,\phi)$, the identity component of $\mathrm{Aut}(\xi,\eta,g,\phi)$.

\begin{corollary}
 If $S$ is a compact manifold endowed with a Sasakian structure $(\xi,\eta,g,\phi)$, then every element of the group $\mathrm{Aut}(\xi,\eta,g,\phi)/\mathrm{Aut}_0(\xi,\eta,g,\phi)$ has finite order.
\end{corollary}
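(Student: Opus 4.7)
The plan is to read off the result from the setup paragraph immediately preceding the statement. Given $\varphi \in \mathrm{Aut}(\xi,\eta,g,\phi)$, the mapping torus $S_\varphi$ is a compact Vaisman manifold by Example \ref{ex:2}, and Corollary \ref{cor:2} produces an integer $m>0$ with $\varphi^m \in \mathrm{Aut}_0(\xi,\eta,g,\phi)$. Granting this, the class of $\varphi$ in the quotient group $\mathrm{Aut}(\xi,\eta,g,\phi)/\mathrm{Aut}_0(\xi,\eta,g,\phi)$ has order dividing $m$, which is precisely the assertion of the corollary.

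The single point requiring justification is the inclusion $\varphi^m \in \mathrm{Aut}_0(\xi,\eta,g,\phi)$: Theorem \ref{theo:1} and Remark \ref{Kotschick} directly yield only that $\varphi^m$ lies in the identity component of $\mathrm{Isom}(S,g)$, which is a priori a larger group. To upgrade this, I would invoke the Myers-Steenrod theorem (already used in the proof of Theorem \ref{theo:1}) to see that $\mathrm{Isom}(S,g)$ is a compact Lie group, and then observe that $\mathrm{Aut}(\xi,\eta,g,\phi)$ is its closed subgroup defined by the closed conditions of preserving $\xi$, $\eta$, and $\phi$. By Cartan's closed-subgroup theorem, $\mathrm{Aut}(\xi,\eta,g,\phi)$ is itself a compact Lie group, hence has only finitely many connected components. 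Therefore $\mathrm{Aut}(\xi,\eta,g,\phi)/\mathrm{Aut}_0(\xi,\eta,g,\phi)$ is actually a finite group, and every element of a finite group has finite order, establishing the result.

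The main obstacle, minor as it is, is precisely this compatibility between the purely metric splitting that powers Corollary \ref{cor:2} and the finer Sasakian structure on $S$: the proof of Theorem \ref{theo:1} records only that $\varphi^m$ can be isotoped to the identity through isometries, not through Sasakian automorphisms, so one needs the separate compact-Lie-group observation above to land in $\mathrm{Aut}_0(\xi,\eta,g,\phi)$. Once that observation is in place the corollary follows immediately; in fact the argument establishes the stronger statement that the quotient group $\mathrm{Aut}(\xi,\eta,g,\phi)/\mathrm{Aut}_0(\xi,\eta,g,\phi)$ is finite.
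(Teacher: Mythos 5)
Your proposal is correct, and it is sharper than what the paper actually does. The paper offers no separate proof of this corollary: the argument is entirely the preceding paragraph, which forms the mapping torus $S_\varphi$ and asserts that Corollary \ref{cor:2} produces $m>0$ with $\varphi^m\in\mathrm{Aut}_0(\xi,\eta,g,\phi)$. You correctly identify the gap in that assertion: Theorem \ref{theo:1} and Remark \ref{Kotschick} only place $\varphi^m$ in $\mathrm{Isom}_0(S,g)$ (the isotopy from $\varphi^m$ to the identity runs through isometries, not through Sasakian automorphisms), so landing in $\mathrm{Aut}_0(\xi,\eta,g,\phi)$ needs a further argument, which the paper omits. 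Your fix --- $\mathrm{Aut}(\xi,\eta,g,\phi)$ is a closed subgroup of the compact Lie group $\mathrm{Isom}(S,g)$ given by Myers--Steenrod, hence is itself a compact Lie group with finitely many components, so the quotient $\mathrm{Aut}(\xi,\eta,g,\phi)/\mathrm{Aut}_0(\xi,\eta,g,\phi)$ is finite --- is exactly the right one (the conditions $\varphi^*\eta=\eta$, and equivalently the preservation of $\xi$ and $\phi$, are closed in the topology of $\mathrm{Isom}(S,g)$). Note what this buys you: once you have the closed-subgroup observation, the mapping torus and the splitting theorem become unnecessary for this corollary, and you obtain the strictly stronger conclusion that the component group is finite, not merely torsion. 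The paper's route, by contrast, keeps the statement as an ``application of the splitting theorem'' but, as written, only delivers $\varphi^m\in\mathrm{Isom}_0(S,g)$ without your supplementary argument.
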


For a careful analysis of the automorphism group of a Sasakian manifold we refer to \cite{BoGa}.

\begin{remark}
In \cite{BG}, the notion of a $K$-cosymplectic structure was introduced. This is an almost contact metric structure 
 $(\xi,\eta,g,\phi)$ with $d\eta=0$, $d\Omega=0$ and $L_\xi g=0$. One can prove that, in this case, the 1-form $\eta$ is parallel. Examples of K-cosymplectic manifolds are given by mapping tori of almost K\"ahler manifolds 
 $(K,g,J,\omega)$ with a diffeomorphism $\varphi\colon K\to K$ such that $\varphi^*g=g$ and $\varphi^*\omega=\omega$; hence they are metric mapping tori. 
If $K$ is compact, so is $K_\varphi$. Hence we can apply Theorem \ref{theo:1} and obtain a finite cover $K\times S^1\to K_\varphi$.
%
\end{remark}

\section*{Acknowledgements}
We thank Prof.\ Dieter Kotschick for very useful conversations and for pointing out to us that an easier proof of our main result is available.
The second author was partially supported by MICINN (Spain) grant MTM2012-34478. The third author was partially supported by a grant from the Simons Foundation: (\#244393 to John Oprea).

\footnotesize

\end{document}